\title
[A spectral theory for combinatorial dynamics]
{A spectral theory for combinatorial dynamics}
\author[\initial{J.} Propp]{\firstname{James} \lastname{Propp}}
\address{University of Massachusetts Lowell\\ Department of Mathematical Sciences}
\urladdr{https://jamespropp.org}
\keywords{combinatorics, dynamics, invariant, homomesy, spectrum, ergodic}
\subjclass{05E18, 06A07; need to fix this}
\newcommand{\cA}{{\mathcal{A}}}
\newcommand{\cO}{{\mathcal{O}}}
\newcommand{\cP}{{\mathcal{P}}}
\newcommand{\rowP}{\rho_{\cP}}
\newcommand{\C}{\mathbb{C}}
\newcommand{\R}{\mathbb{R}}
\newcommand{\downt}{\nabla}
\newcommand{\upti}{\Delta^{-1}}
\newcommand{\nh}{\ar@{-}[ru]}
\newcommand{\sh}{\ar@{-}[rd]}
\newcommand{\zb}{\bar{\zeta}}
\newcommand{\z}{{\zeta}}
\begin{document}

\begin{center}
{\it To the memories of Jacob Feldman and Gian-Carlo Rota}
\end{center}

\begin{abstract}
This article proposes a framework for the study of
periodic maps $T$ from a (typically finite) set $X$ to itself
when the set $X$ is equipped with one or more real- or complex-valued functions.
The main idea, inspired by 
the time-evolution operator construction from ergodic theory,
is the introduction of a vector space that contains the given functions
and is closed under composition with $T$, 
along with a time-evolution operator on that vector space.
I show that the invariant functions and 0-mesic functions 
span complementary subspaces associated respectively with 
the eigenvalue 1 and the other eigenvalues.
Alongside other examples, 
I give an explicit description of the spectrum 
of the evolution operator when $X$ is the set of 
$k$-element multisets with elements in $\{0,1,\dots,n-1\}$,
$T$ increments each element of a multiset by 1 mod $n$,
and $g_i: X \rightarrow \R$ (with $1 \leq i \leq k$)
maps a multiset to its $i$th smallest element.
\end{abstract}

\maketitle

\begin{section}{Introduction}
\label{sec:intro}

Recent work in dynamical algebraic combinatorics
has paid a great deal of attention to {\em homomesies:}
numerical functions on a dynamical system with the property that
the average of the function over an orbit 
doesn't depend on which orbit one takes.
Missing from this work is attention to {\em invariants:}
quantities that are constant on orbits.
It would be conceptually helpful to bring 
homomesies and invariants into a uniform framework. 
The article [PR] that introduced the concept of homomesy
provided this framework in the special case 
of linear actions (see section 2.4 of that article),
but most actions of interest to combinatorialists
are nonlinear (piecewise linear, birational, or purely combinatorial).
Here I introduce a vector space that may be
the proper setting for a unified treatment of homomesies and invariants;
a periodic action induces a diagonalizable linear map on the vector space,
and the spectrum of the map carries dynamical information
about homomesies and invariants for the action and its powers.

\end{section}

\begin{section}{Linearization}
\label{sec:framework}

Given a set $X$, an invertible map $T$ from $X$ to itself
satisfying $T^n = {\rm Id}_X$ 
(that is, $T^n(x) = x$ for all $x \in X$) with $n \geq 1$,
and a collection of functions $g_1, \dots, g_k$ from $X$ to $\C$,
let $V$ be the linear span of all functions of the form
$g_i \circ T^j$ with $1 \leq i \leq k$, $0 \leq j < n$.
I will sometimes refer to the $g_i$'s as {\em statistics}
(or sometimes as the ``original'' statistics
as opposed to the time-shifted statistics $g_i \circ T^j$).
$V$ is a finite-dimensional space of dimension at most $kn$.

The role played by the $g_i$'s is crucial,
much as the choice of an algebra of measurable sets
is crucial in ergodic theory.
When we enlarge our set of initial statistics we potentially enlarge $V$,
and when we restrict our set of initial statistics we potentially reduce $V$.
As an extreme case, if our collection of statistics is empty,
$V$ is 0-dimensional regardless of the dynamics of the map $T$.

For all $f \in V$ define $Uf \in V$ by $(Uf)(x) = f(T(x))$.
We will call the action of $U$ on $V$
the {\em linearization} of the original action of $T$ on $X$.
It is easy to check that $U$ sends $V$ to itself
(e.g., $U$ sends $g_i \circ T^j$ to $g_i \circ T^{j+1}$,
which in the case $j=n-1$ is $g_i$);
that $U$ is linear; that $U^n$ is the identity map on $V$;
and that $U$ has inverse $U^{n-1}$.
$V$ is the {\em dynamical span\/} of $g_1,\dots,g_k$
in the sense that it is the smallest vector space
that contains $g_1,\dots,g_k$ and is closed under the action of $U$.
We will also call $V$ the {\em dynamical closure\/}
of the linear span of $g_1,\dots,g_k$.
The function $g_i \circ T^j$ can also be written as $U^j g_i$.

Let us say that a function $f \in V$ is {\em homomesic}
if $\frac{1}{n} \sum_{j=0}^{n-1} U^j f$ is a constant function,
and more specifically {\em c-mesic} 
if $\frac{1}{n} \sum_{j=0}^{n-1} U^j f = c$ for all $x \in X$,
that is, if $\frac{1}{n} \sum_{j=0}^{n-1} f (T^j x) = c$ for all $x \in X$.
Let us say that a function $f \in V$ is {\em invariant}
if $U f = f$, that is, if $f(Tx) = f(x)$ for all $x \in X$.

When $X$ is finite (as it will be 
except in sections~\ref{sec:rowmotion} and~\ref{sec:lyness})
we create an $|X|$-by-$kn$ matrix $M$ whose rows correspond to elements $x \in X$
(the order in which the elements of $X$ are listed is unimportant)
and whose columns from left to right correspond to the respective functions
$g_1,\dots,g_k,Ug_1,\dots,Ug_k,\dots,U^{n-1}g_1,\dots,U^{n-1}g_k$,
where the entry in the $x$ row and the $U^j g_i$ column 
is $(U^j g_i)(x) = g_i(T^j x)$;
call $M$ the {\em presenting matrix} of $U$, 
and note that $V$ can be identified with the span of the columns of $M$.
In particular, $\dim V$ is the rank of $M$.

Since $U^n = I$ (the identity map from $V$ to itself),
$U$ is a diagonalizable operator on $V$ 
whose eigenvalues are $n$th roots of unity;
there exists a basis for $V$ whose elements are eigenvectors for $U$.
(Typically our statistics are real-valued, but if we want to look 
at the eigenspaces associated with eigenvalues other than 1 and $-1$
we need to treat $V$ as a vector space over $\C$.)
Let $V_1$ be the span of the 1-eigenvectors
(i.e., the nonzero elements $f \in V$ satisfying $Uf=1f=f$)
and let $V_1^\perp$ be the span of the other (non-unital) eigenvectors,
so that $V = V_1 \oplus V_1^\perp$ and $\dim V$ = $\dim V_1 + \dim V_1^\perp$.
(Note that despite the notation no inner product is involved;
that is, $V_1^\perp$ is a complement but not an orthocomplement.)

\begin{prop}
$V_1$ is the space of invariants and $V_1^\perp$ is the space of 0-mesies.
\end{prop}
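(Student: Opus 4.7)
The plan is to produce an explicit projection operator onto $V_1$ and read off both claims from how it acts on the eigenbasis guaranteed by the paragraph preceding the proposition.

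First I would dispose of the invariants. By definition, $V_1$ is the $1$-eigenspace of $U$, i.e.\ $\{f \in V : Uf = f\}$; and by the definition given in the text, $f$ is invariant exactly when $Uf = f$. So these two sets coincide literally by definition, with nothing to prove.

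For the 0-mesic claim, I would introduce the averaging operator
\[
P := \frac{1}{n}\sum_{j=0}^{n-1} U^j,
\]
which maps $V$ to $V$ and by definition sends $f$ to the function whose value at $x$ is $\frac{1}{n}\sum_{j=0}^{n-1} f(T^j x)$. So $f$ is $0$-mesic precisely when $Pf = 0$, and more generally $c$-mesic precisely when $Pf$ equals the constant function $c$. The goal therefore reduces to showing $\ker P = V_1^\perp$. Using the eigenbasis of $U$ from the preceding paragraph, it suffices to compute $P$ on an eigenvector $f$ with $Uf = \lambda f$ where $\lambda^n = 1$: one finds $Pf = \bigl(\tfrac{1}{n}\sum_{j=0}^{n-1}\lambda^j\bigr) f$, which equals $f$ if $\lambda = 1$ and equals $0$ otherwise (the standard geometric-series identity $\sum_{j=0}^{n-1}\lambda^j = 0$ for $\lambda$ an $n$th root of unity other than $1$). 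Hence $P$ is the projector onto $V_1$ along $V_1^\perp$, so its kernel is exactly $V_1^\perp$, completing the proof.

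There is no real obstacle here: the argument is the standard ergodic/finite-group averaging projection, and the only small point to flag in the write-up is that ``$0$-mesic'' as defined in the text corresponds to $Pf$ being the zero function on $X$ (equivalently the zero vector of $V$), not merely $Pf$ having zero average. Since $P$ has been diagonalized simultaneously with $U$, the direct sum decomposition $V = V_1 \oplus V_1^\perp$ then matches the decomposition into invariants plus 0-mesies, and one also gets for free that every $f \in V$ has $f = Pf + (f - Pf)$ with $Pf$ invariant and $f - Pf$ being $0$-mesic, which is a convenient byproduct worth recording.
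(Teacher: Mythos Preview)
Your proof is correct and follows essentially the same route as the paper's: both compute the averaging operator $\frac{1}{n}\sum_{j=0}^{n-1}U^j$ on the eigenbasis via the geometric-series identity, observe it kills the non-unital eigenvectors and fixes the $1$-eigenvectors, and read off the result. Your explicit framing of $P$ as the projection onto $V_1$ along $V_1^\perp$ is a slight repackaging of the paper's decomposition $f=f_1+f_2$ argument, but the content is identical.
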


\begin{proof}
If $f \in V_1$ then $Uf=1f=f$ and vice versa; 
hence $V_1$ is the space of invariants.
If $f \in V_1^\perp$ so that $f$ is in the span of the non-unital eigenvectors
then $f+Uf+U^2f+\cdots+U^{n-1}f = 0$ (since in the case where 
$f$ is a $\zeta$-eigenvector with $\zeta \neq 1$
we have $f+Uf+U^2f+\cdots+U^{n-1}f = 
(1+\zeta+\zeta^2+\cdots+\zeta^{n-1})f = 0f$).
Conversely, given $f$ satisfying $f+Uf+U^2f+\cdots+U^{n-1}f = 0$,
if we write $f = f_1 + f_2$ with $f_1 \in V_1$ and $f_2 \in V_1^\perp$
and we apply $I+U+U^2+\cdots+U^{n-1}$ to $f_1+f_2$,
we get $nf_1$, which can only vanish if $f_1$ does,
implying $f \in V_1^\perp$.
\end{proof}

It follows that the multiplicity of 1 as an eigenvalue of $U$
gives the number of linearly independent invariants in $V$
while the sum of the multiplicities of the non-unital eigenvalues of $U$
gives the number of linearly independent 0-mesies in $V$.
It is easy to show that every homomesic function $f$ can be written uniquely 
as the sum of a constant function and a 0-mesic function, specifically 
the constant function $g = (1/n) (f + U f + U^2 f + ... + U^{n-1} f)$ 
and the 0-mesic function $h = f - g$.

In view of the complementary nature of $V_1$ and $V_1^\perp$,
it might be appropriate to refer to the elements of $V_1^\perp$
as ``coinvariants'' rather than 0-mesies,
but this word is already in use with a different meaning.
Another term for elements of $V_1^\perp$ that seems apt is ``survariants'',
which has no existing meaning and in some ways seems preferable to ``0-mesies'';
however, the terms ``homomesy'' and ``homomesic''
seem to have been adopted to the point where
a change in nomenclature might be confusing.

It is possible for $V_1$ to be trivial;
for instance, if $X = \{1,-1\}$ with $T(x)=-x$ of order $n=2$
equipped with the identity statistic $g_1(x) = x$,
then $V$ is 1-dimensional with $\dim V_1 = 0$ and $\dim V_1^\perp = 1$.
However, for all examples considered in this paper
$V_1$ will contain the constant functions and hence have dimension $\geq 1$.
As long as there is at least one homomesic function $f$ that is not a 0-mesic,
the nonzero constant function $\sum_{j=0}^{n-1} U^j f$ is in $V$,
so that the constant functions form a 1-dimensional subspace of $V_1$.

I proceed to give alternative characterizations of $V_1$ and $V_1^\perp$.

\begin{prop}
\label{inv-rep}
$f \in V_1$ if and only if there exists
$g \in V$ with $f = g + Ug + U^2g + \cdots + U^{n-1}g$.
\end{prop}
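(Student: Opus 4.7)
The plan is to prove the two implications separately, both of which are short and rely only on the fact that $U^n = I$ on $V$.

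For the ``if'' direction, suppose $f = g + Ug + U^2g + \cdots + U^{n-1}g$ for some $g \in V$. Applying $U$ and using linearity gives $Uf = Ug + U^2g + \cdots + U^{n-1}g + U^n g$, and since $U^n = I$ the last term is $g$, so the sum telescopes back to $f$. Hence $Uf = f$, i.e., $f \in V_1$.

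For the ``only if'' direction, assume $f \in V_1$, so $Uf = f$ and therefore $U^j f = f$ for every $j$. The obvious candidate is $g = (1/n)f$, which lies in $V$ since $V$ is a complex vector space. Then $g + Ug + \cdots + U^{n-1}g = (1/n)(f + Uf + \cdots + U^{n-1}f) = (1/n)(nf) = f$.

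Neither direction presents a real obstacle; the main conceptual point is simply that $(I + U + U^2 + \cdots + U^{n-1})$, when restricted to $V_1$, acts as multiplication by $n$ (hence is invertible there), while on an eigenvector with eigenvalue $\zeta \neq 1$ it acts as zero. In fact the same argument shows the stronger statement that the map $g \mapsto g + Ug + \cdots + U^{n-1}g$ has image exactly $V_1$ and kernel exactly $V_1^\perp$, recovering the decomposition from the previous proposition; but for the statement as written, the two one-line verifications above suffice.
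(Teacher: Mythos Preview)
Your proof is correct and essentially identical to the paper's own argument: both directions use $U^n = I$, and for the converse the paper also takes $g = (1/n)f$. Your closing remark about the image and kernel of $I + U + \cdots + U^{n-1}$ is exactly the observation the paper makes later in the section.
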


\begin{proof}
If $f = g + Ug + U^2g + \cdots + U^{n-1}g$
then $Uf = Ug + U^2g + U^3g + \cdots + U^ng = f$
(since $U^ng=g$), so $f \in V_1$.
Conversely, suppose $f$ is in $V_1$.
Then putting $g=(1/n)f$ we have
$f = g + Ug + U^2g + \cdots + U^{n-1}g$.
\end{proof}

Consequently, the space of invariants is spanned by the sums 
$f_i := g_i + Ug_i + \cdots + U^{n-1}g_i$;
call these the {\em spanning invariants}.
Since these $k$ functions span $V_1$, $\dim V_1 \leq k$.
If we divide the presenting matrix into $n$ blocks of width $k$
and sum those blocks, we obtain a matrix $M_1$ whose column span is $V_1$.

I next generalize Proposition~\ref{inv-rep} to eigenfunctions,
exploiting the discrete Fourier transform.
Given $\zeta \in \C$ with $\zeta^n = 1$,
let $V_\zeta$ be the space of $f \in V$ with $Uf = \zeta f$
(or equivalently $\zb U f = f$),
so that $V_1^\perp$ is the direct sum
of the spaces $V_\zeta$ with $\zeta \neq 1$.

\begin{prop}
\label{eigen}
For $\zeta^n = 1$, $f \in V_\zeta$ if and only if there exists
$g \in V$ with $f = g + \zb U g + \cdots + \zb^{n-1} U^{n-1} g$.
\end{prop}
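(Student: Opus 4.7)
The plan is to mimic the proof of Proposition~\ref{inv-rep}, which is the special case $\zeta=1$ of this statement. The key algebraic facts to exploit are that $\zeta^n=1$ forces $\zb^n=1$ and $\zb\zeta=1$, so the Fourier weights $\zb^j$ interact cleanly with both $U^n=I$ and with multiplication by $\zeta$.

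For the ``if'' direction, I would start from $f = g+\zb Ug+\zb^2 U^2 g+\cdots+\zb^{n-1}U^{n-1}g$, apply $U$ term by term, use $U^n g = g$ on the final summand, and then multiply through by $\zb$. Invoking $\zb^n=1$ cyclically reindexes the shifted sum back to the original one, yielding $\zb Uf=f$, i.e., $Uf=\zeta f$, so $f\in V_\zeta$.

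For the converse, I would imitate the earlier choice $g=(1/n)f$; this clearly lies in $V$. Since $Uf=\zeta f$ gives $U^j f=\zeta^j f$, the right-hand side of the proposed identity becomes $(1/n)\sum_{j=0}^{n-1}(\zb\zeta)^j f$, which collapses to $(1/n)(nf)=f$ because $\zb\zeta=1$.

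The main obstacle is minimal, amounting to careful bookkeeping with roots of unity. Conceptually, the result says that the twisted averaging operator $\frac{1}{n}\sum_{j=0}^{n-1}\zb^j U^j$ is the spectral projector onto $V_\zeta$, so the proposition is a discrete Fourier inversion identity for the cyclic group generated by $U$; this perspective also makes it clear that the recipe $g=(1/n)f$ is really the only natural choice.
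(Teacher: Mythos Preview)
Your proof is correct and is essentially the same argument as the paper's, just unpacked. The paper compresses both directions into a single line by observing that $U' := \zb U$ also satisfies $(U')^n = I$, so Proposition~\ref{inv-rep} applies verbatim with $U'$ in place of $U$; your computations are precisely what that substitution yields when written out.
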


\begin{proof}
The proof of Proposition~\ref{inv-rep} applies to
any linear operator $U'$ on the $\C$-vector space $V$
that satisfies $(U')^n = I$, and in particular 
applies to $U' = \overline{\zeta} U$.
\end{proof}

Consequently, the $\zeta$-eigenspace is spanned by the functions
$g_i + \zb U g_i + \cdots + \zb^{n-1} U^{n-1} g_i$;
let us call these the {\em spanning $\zeta$-eigenfunctions}.
We have $\dim V_\zeta \leq k$.
If we take the blocks discussed following the proof of Proposition~\ref{inv-rep}
and sum them with respective coefficients $1,\zb,\zb^2,\dots$,
we obtain a matrix $M_\zeta$ whose column span is $V_\zeta$.

\medskip

I provide an alternate characterization of the elements of 
$V_1^\perp = \bigoplus_{\zeta^n=1,\ \zeta \neq 1} V_\zeta$:

\begin{prop}
\label{mesic}
$f \in V_1^\perp$ if and only if there exists $g \in V$ with $f = g - Ug$.
\end{prop}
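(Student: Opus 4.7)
The plan is to prove each direction separately using the apparatus already in place.

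For the ``if'' direction, I would assume $f = g - Ug$ for some $g \in V$ and compute $\sum_{j=0}^{n-1} U^j f$ by telescoping. Because $U$ is linear and $U^n = I$, the sum $\sum_{j=0}^{n-1} (U^j g - U^{j+1} g)$ collapses to $g - U^n g = 0$. Hence $f$ is $0$-mesic, and by the first proposition (identifying $V_1^\perp$ with the space of $0$-mesies) we get $f \in V_1^\perp$.

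For the ``only if'' direction, the most transparent route exploits the eigenspace decomposition $V_1^\perp = \bigoplus_{\zeta \neq 1,\, \zeta^n=1} V_\zeta$ established above. Given $f \in V_1^\perp$, write $f = \sum_{\zeta \neq 1} f_\zeta$ with $f_\zeta \in V_\zeta$, and set
\[
g \;:=\; \sum_{\zeta \neq 1} \frac{1}{1-\zeta}\, f_\zeta,
\]
which makes sense because every $\zeta$ in the sum satisfies $1 - \zeta \neq 0$. Since $U f_\zeta = \zeta f_\zeta$, each summand contributes $(I-U)\bigl((1-\zeta)^{-1} f_\zeta\bigr) = f_\zeta$, so $(I-U) g = f$, i.e., $f = g - U g$ with $g \in V_1^\perp \subseteq V$.

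An alternative, perhaps even cleaner, argument avoids writing the eigen-decomposition explicitly: the kernel of $I - U$ on $V$ is by definition $V_1$, so the image of $I-U$ has dimension $\dim V - \dim V_1 = \dim V_1^\perp$; by the forward direction this image is contained in $V_1^\perp$, and equality of dimensions then forces the image to be all of $V_1^\perp$. I do not anticipate any real obstacle here; the only point that requires a moment's care is verifying that the witness $g$ lies in $V$ itself, but both arguments give this for free (the eigencomponents $f_\zeta$ lie in $V$, and $I-U$ maps $V$ into $V$).
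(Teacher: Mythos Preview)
Your proof is correct. The ``if'' direction matches the paper's telescoping argument exactly. For the ``only if'' direction, however, the paper takes a different and more explicit route: it defines $h = f + 2Uf + 3U^2 f + \cdots + nU^{n-1}f$ and verifies by direct computation that $h - Uh = -nf$ (using that $\sum_j U^j f = 0$ and $U^n f = f$), so that $g = -(1/n)h$ works. This is a concrete, basis-free formula that never invokes the eigenspace decomposition.

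Your two arguments are also valid. The eigendecomposition approach makes the inverse of $I-U$ on $V_1^\perp$ completely transparent, and the rank-nullity argument is the shortest possible proof once the forward direction is in hand. What the paper's approach buys is an explicit polynomial in $U$ that produces the witness $g$ uniformly, without first splitting $f$ into eigencomponents; this is in the spirit of the surrounding propositions, which give analogous explicit formulas for $V_1$ and $V_\zeta$ (and the paper remarks just after the proof that each of these propositions exhibits a kernel as an image, yielding short exact sequences). Your alternative arguments are arguably cleaner but less constructive.
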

\noindent
(Note that we could just as well have used $Ug - g$ as $g$.
A function of the form $Ug - g$ is called 
a coboundary in dynamical system theory; 
it measures the increase in $g$ from one moment to the next.)

\begin{proof}
If $f = g - Ug$, then 
$f+Uf+\cdots+U^{n-1}f = (g-Ug)+(Ug-U^2g)+\cdots+(U^{n-1}g-U^ng) 
= g-U^ng = 0$ (since $U^n g = g$),
so $f$ is 0-mesic and belongs to $V_1^\perp$.
Conversely, suppose $f$ is in $V_1^\perp$.
Let $h = 1f + 2Uf + 3U^2f + \cdots + nU^{n-1}f$.  Then 
\begin{eqnarray*}
h-Uh & = & \ \ (1U^0f + 2U^1f + 3U^2f + \cdots + (n-1)U^{n-2}f + nU^{n-1}f) \\
& & - (1U^1f + 2U^2f + 3U^3f + \cdots + (n-1)U^{n-1}f + nU^nf) \\
& = & (U^0f + U^1f + U^2f + \cdots + U^{n-1}f) - nU^nf \\
& = & 0 - nf \ \ \mbox{(because $f$ is 0-mesic and $U^n f = f$)} \\
& = & -nf,
\end{eqnarray*}
so putting $g = -(1/n)h$ we have $f = g-Ug$.  \end{proof}

If we are interested in counting all nonunital eigenvalues (with multiplicity),
we can just take the rank of the matrix $M-M'$
where $M$ is the presenting matrix and
$M'$ is obtained from $M$ by cyclically shifting
the columns $k$ positions to the right.

It should be stressed that in the framework being proposed here,
 0-mesies can be seen either as coboundaries of arbitrary functions 
or as combinations of eigenfunctions of the time-evolution operator $U$.
In particular, real 0-mesies can occur as
combinations of complex eigenfunctions.

\medskip

It may be helpful to note that the 0-mesies form 
the kernel of $I+U+U^2+\cdots+U^{n-1}$,
which Proposition~\ref{mesic} identifies as the image of $I-U$;
likewise, the invariants form the kernel of $I-U$,
which Proposition~\ref{inv-rep} identifies 
as the image of $I+U+U^2+\cdots+U^{n-1}$;
and more generally the $\zeta$-eigenfunctions form the kernel of $I-\zb U$,
which Proposition~\ref{eigen} identifies as the image 
of $I+\zb U + \zb^2 U^2 + \cdots + \zb^{n-1} U^{n-1}$.
In each case we have a short exact sequence of vector spaces.
Two function $g_1,g_2$ give rise to the same element of $V_1$ 
via Proposition~\ref{inv-rep} iff they differ by an element of $V_1^\perp$,
and two function $g_1,g_2$ give rise to the same element of $V_1^\perp$ 
via Proposition~\ref{mesic} iff they differ by an element of $V_1$.

Let us define $V^{(0)}$
as the intersection of $V_1^\perp$ with the span of $g_1,\dots,g_k$.
There is a sense in which $V^{(0)}$ determines $V_1^\perp$,
namely, the linear combination $\sum_{i,j} a_{i,j} \,U^j g_i$ is in $V_1^\perp$
if and only if the linear combination $\sum_{i,j} a_{i,j} g_i$ 
(in which $U^j g_i$ is replaced by $g_i$) is in $V_1^\perp$.
That is because every function of the form $U^j g - g$ is a coboundary 
(note that $U^j g - g = U(g+Ug+\dots+U^{j-1}g) - (g+Ug+\dots+U^{j-1}g)$).

The sum of the multiplicities of the non-unital eigenvalues
(that is, the eigenvalues unequal to 1)
gives us the dimension of the space of 0-mesies.
The dynamical significance of the specific multiplicities 
of individual non-unital eigenvalues is subtler.
For every $d$ dividing $n$, the sum of the multiplicities 
of the eigenvalues $\zeta$ satisfying $\zeta^d = 1$
is the dimension of the space of invariants of $T^d$,
while the sum of the multiplicities of those $\zeta$ with $\zeta^d \neq 1$
is the dimension of the space of 0-mesies of $T^d$.
Additional meaning of the multiplicities appears if,
in the spirit of classical invariant theory, one extends $V$ to a ring $R$,
introducing statistics that are products of the $g_i$'s.
Invariant functions in the ring $R$
can arise from noninvariant eigenfunctions in $V$
associated with complex eigenvalues whose product is 1;
see the paragraph following the proof of Proposition~\ref{prop:two}.
However, it should be noted that the ring $R$ 
need not be graded by polynomial degree;
e.g., in the example treated in Proposition~\ref{prop:two},
each $g_i$ takes values in $\{0,1\}$ and hence satisfies $g_i^2 = g_i$.
Also note that when the set $X$ is finite, the ring $R$ 
(being a set of functions with domain $X$),
viewed as a vector space, must be finite-dimensional.

\end{section}

\begin{section}{Example: Rowmotion in a Chain}

\label{sec:chain}

Fix $n,k \geq 2$
and let $X_{n,k}$ be the set of $k$-element multisets 
with elements belonging to $\{0,1,\dots,n-1\}$.
Let us denote a generic element of $X_{n,k}$ 
by $x = (x_1,x_2,\dots,x_k)$
with $x_1 \leq x_2 \leq \cdots \leq x_k$.
We can represent an element of $X_{n,k}$
by writing down its elements with appropriate multiplicities,
in weakly increasing order, 
with parentheses and intervening commas omitted.
For instance, $X_{3,3} = \{000, 001, 002, 011, 012, 022, 111, 112, 122, 222\}$.
I will use superscripts to indicate repetition,
e.g., I write $X_{n,k} = \{0^k,0^{k-1}1,\dots,(n-1)^k\}$.
It is well-known that $|X_{n,k}| = {n+k-1 \choose k}$.

Given $x = (x_1,x_2,\dots,x_k)$, put $x_0 = 0$ and $x_{k+1} = n-1$,
and for $1 \leq i \leq k$ define 
the reflections $\rho_i: X_{n,k} \rightarrow X_{n,k}$
by $\rho_i(x) = x'$ where $x'_i = x_{i-1}+x_{i+1}-x_i$
and $x'_j = x_j$ for all $j \neq i$.
The $\rho_i$'s satisfy the Coxeter relations
$\rho_i^2 = 1$, $(\rho_i \rho_{i+1})^3 = 1$,
and $(\rho_i \rho_j)^2 = 1$ for $|i-j| > 1$,
and thus form a representation of the Coxeter group $A_k$.

Any product of all the $\rho_i$'s, each taken one at a time,
is a Coxeter element $\gamma$ satisfying $\gamma^{k+1} = 1$.
For convenience, we take $\gamma_{n,k} = \rho_k \circ \cdots \circ \rho_1$
which updates entries from left to right.
For instance, $\gamma_{3,3}$ sends 000 to 002 to 022 to 222 to 000
and sends 001 to 012 to 122 to 111 to 001
and sends 011 to 112 to 011.
We have $\gamma_{n,k}(x) = x'$ where $x'_i = x'_{i-1}+x_{i+1}-x_i$;
that is, the new value in location $i$ equals the new value in location $i-1$
plus the old value in location $i+1$ minus the old value in location $i$.
The operation $\gamma$ can be seen as 
a special case of piecwise-linear rowmotion on 
the order polytope of a product of two chains
in the case where one of the chains is of length 1.
For more on piecewise-linear rowmotion, see [EP2]
and section~\ref{sec:rowmotion} of this article.

For $x = (x_1,x_2,\dots,x_k)$ and $0 \leq i \leq k+1$ let $g_i(x) = x_i$,
so that for instance $g_1(001) = 0$, $g_2(001) = 0$, and $g_3(001) = 1$.
The maps $g_i: X \rightarrow \R$ ($1 \leq i \leq k$) and $T: X \rightarrow X$
are all linear with respect to $x_0,\dots,x_{k+1}$ 
so the dynamical span $V$ of $g_1,\dots,g_k$ is easy to determine;
it is spanned by $g_1,\dots,g_k$ along with the constant functions.
Define $U$ as above.
The relation $x'_i = x'_{i-1}+x_{i+1}-x_i$
gives $Ug_i = Ug_{i-1} + g_{i+1} - g_i$.
If we apply $U^j$ to the preceding relation
and sum as $j$ goes from $0$ to $k$,
we find that the orbit-averages of $g_{i-1}$, $g_i$, and $g_{i+1}$
are in arithmetic progression.
Since the orbit-averages of $g_0$ and $g_{k+1}$ 
are 0 and $n-1$ respectively,
we see that $g_i$ is $c_i$-mesic with $c_i = i\frac{n-1}{k+1}$.
The $k$ functions $g_i-c_i$ ($1 \leq i \leq k$) 
are the 0-mesies of this action
while the constant functions are the only invariants;
together they span the full $(k+1)$-dimensional vector space
dynamically spanned by the $g_i$'s.
The action of $U$ on this space is a simple rotation whose spectrum 
assigns multiplicity 1 to each $k+1$st root of 1.
Concretely, $U$ acts as a cyclic rotation on the functions
$g_1-g_0$, $g_2-g_1$, \dots, $g_{k+1}-g_{k}$.

\end{section}

\begin{section}{Example: Multiset Rotation} 

\label{sec:multiset}

Fix $n,k \geq 2$, and define $X_{n,k}$ as in section~\ref{sec:chain}.

Let us define the {\em rotation operator} $T_{n,k}: X_{n,k} \rightarrow X_{n,k}$
that increments each element of $x$ by 1 mod $n$,
sending $i$ to $i+1$ for $i<n-1$ and sending $n-1$ to 0;
for instance, $T_{3,3}(001) = 112$ and $T_{3,3}(112) = 022$
(note that the 2 in 112 has become a 0 and has moved to the left).
It is obvious that $T_{n,k}^n$ is the identity on $X_{n,k}$.
In contexts where it is safe to do so without confusion,
I will omit subscripts on $X$ and $T$.
Let $g_i(x)$ denote the $i$th smallest element of $x$ as before,
and let $U_{n,k}$ denote the time-evolution operator
for the action $f \mapsto f \circ T_{n,k}$
on the dynamical closure of the linear span of $g_1,\dots,g_k$.

It has been known for several years,
as part of the unwritten lore of dynamical algebraic combinatorics,
that $g_i + g_j$ takes average value $n-1$ on each orbit of $T$
whenever $i+j=k+1$ (including the case $i=j=(k+1)/2$ when $k$ is odd).
When we pass to the dynamical span
we find the invariants that were ``missing''
from earlier treatments of this example.

The case $n=2$ behaves differently than the general case,
so we give it separate consideration.
Here the possible eigenvalues are $1$ and $-1$.

\begin{prop}
\label{prop:two}
The multiplicities of the eigenvalues $1$ and $-1$ 
in the spectrum of $U_{2,k}$
are $\lceil (k+1)/2 \rceil$ and $\lfloor (k+1)/2 \rfloor$ respectively.
\end{prop}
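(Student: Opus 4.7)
The plan is to exploit a collapse specific to $n=2$: the dynamical span $V$ coincides with the full function space $\C^{X_{2,k}}$, so $U_{2,k}$ is simply the permutation representation of $T_{2,k}$ and its spectrum can be read off from the fixed-point count.

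First I would verify $\dim V = k+1 = |X_{2,k}|$. Identifying a multiset in $X_{2,k}$ with its number of $1$'s gives a bijection $X_{2,k} \cong \{0, 1, \ldots, k\}$, under which $g_i$ becomes the indicator function of the subset $\{k+1-i, \ldots, k\}$. A short calculation using that $T$ swaps $0$'s and $1$'s gives the identity $U g_i + g_{k+1-i} = 1$, which both places the constant function $1$ into $V$ and shows $V = \mathrm{span}(1, g_1, \ldots, g_k)$. Since these $k+1$ indicators correspond to a strictly nested chain of subsets of $\{0, 1, \ldots, k\}$, they are linearly independent; hence $\dim V = k+1 = |X_{2,k}|$ and $V = \C^{X_{2,k}}$.

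With this identification, $U_{2,k}$ is the permutation representation of $T_{2,k}$ on $\C^{X_{2,k}}$, so $\mathrm{tr}(U_{2,k})$ equals the number of fixed points of $T_{2,k}$. A $T$-fixed multiset has equal numbers of $0$'s and $1$'s, which forces $k$ to be even and produces exactly one such multiset; so $\mathrm{tr}(U_{2,k}) = 1$ when $k$ is even and $0$ when $k$ is odd. Since $U^2 = I$, the eigenvalues lie in $\{1,-1\}$; writing $a,b$ for their multiplicities, the system $a+b = k+1$ and $a-b = \mathrm{tr}(U_{2,k})$ yields $a = \lceil(k+1)/2\rceil$ and $b = \lfloor(k+1)/2\rfloor$ in both parities of $k$.

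The only step requiring real attention is the linear-independence argument that forces $V$ to exhaust $\C^{X_{2,k}}$; once that is in hand, the trace computation and the $2 \times 2$ linear system are routine.
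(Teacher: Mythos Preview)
Your proof is correct and takes a genuinely different route from the paper's. The paper establishes $\dim V = k+1$ by inspecting the rows of the $(k+1)\times 2k$ presenting matrix, then computes $\dim V_1$ and $\dim V_{-1}$ separately by forming the block-sum and block-difference matrices $M_1$ and $M_{-1}$ (as described after Propositions~\ref{inv-rep} and~\ref{eigen}) and counting their ranks directly by observing which rows repeat or negate earlier rows. You instead use the identity $Ug_i + g_{k+1-i} = 1$ to exhibit an explicit basis $\{1,g_1,\dots,g_k\}$ for $V$, identify $V$ with all of $\C^{X_{2,k}}$, and then read off the multiplicities from the trace of the permutation representation via the fixed-point count of $T_{2,k}$. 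Your approach is cleaner and avoids any matrix-rank bookkeeping; the paper's approach, on the other hand, is a deliberate worked instance of the general $M_\zeta$ machinery from Section~\ref{sec:framework}, which it is trying to illustrate, and does not rely on the accident (specific to $n=2$) that $V$ happens to be the full function space.
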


\begin{proof}
$X$ has $k+1$ elements;
if $k$ is odd, $X$ consists of $(k+1)/2$ orbits of size 2,
and if $k$ is even, $X$ consists of $k/2$ orbits of size 2 
and the fixed point $0^{k/2} 1^{k/2}$.
Here I show the presenting matrix for $k=3$
whose columns correspond to the six functions
$g_1$, $g_2$, $g_3$, $Ug_1$, $Ug_2$, and $Ug_3$,
and the presenting matrix for $k=4$
whose columns correspond to the eight functions
$g_1$, $g_2$, $g_3$, $g_4$, $Ug_1$, $Ug_2$, $Ug_3$, and $Ug_4$:
$$ \left(
\begin{array}{ccccccc}
\,0\,&\,0\,&\,0\,&\,&\,1\,&\,1\,&\,1 \\
0&0&1&&0&1&1 \\
0&1&1&&0&0&1 \\
1&1&1&&0&0&0 
\end{array}
\right) $$
$$ \left(
\begin{array}{ccccccccc}
\,0\,&\,0\,&\,0\,&\,0\,&\,&\,1\,&\,1\,&\,1\,&\,1 \\
0&0&0&1&&0&1&1&1 \\
0&0&1&1&&0&0&1&1 \\
0&1&1&1&&0&0&0&1 \\
1&1&1&1&&0&0&0&0 
\end{array}
\right) $$

It is easy to see that for general $k$
the $k+1$ rows of the presenting matrix are linearly independent,
so that the matrix has rank $k+1$,
which implies that the dynamical span of the original vector space
(the column-span of the matrix) is $(k+1)$-dimensional.
If we add the left half of the $(k+1)$-by-$2k$ presenting matrix 
to the right half,
we get a matrix whose columns correspond to 
the spanning invariants $g_1+Ug_1$, $g_2+Ug_2,\dots$
and therefore span the space of invariants.
In the cases $k=3$ and $k=4$, these matrices are
$$ \left(
\begin{array}{ccc}
\,0\,&\,0\,&\,0 \\
0&0&1 \\
0&1&1 \\
1&1&1 
\end{array}
\right)
+
\left(
\begin{array}{ccc}
\,1\,&\,1\,&\,1 \\
0&1&1 \\
0&0&1 \\
0&0&0 
\end{array}
\right)
=
\left(
\begin{array}{ccc}
\,1\,&\,1\,&\,1 \\
0&1&2 \\
0&1&2 \\
1&1&1
\end{array}
\right)
$$
and
$$
\left(
\begin{array}{cccc}
\,0\,&\,0\,&\,0\,&\,0 \\
0&0&0&1 \\
0&0&1&1 \\
0&1&1&1 \\
1&1&1&1 
\end{array}
\right)
+
\left(
\begin{array}{cccc}
\,1\,&\,1\,&\,1\,&\,1 \\
0&1&1&1 \\
0&0&1&1 \\
0&0&0&1 \\
0&0&0&0 
\end{array}
\right)
=
\left(
\begin{array}{cccc}
\,1\,&\,1\,&\,1\,&\,1 \\
0&1&1&2 \\
0&0&2&2 \\
0&1&1&2 \\
1&1&1&1 
\end{array}
\right)
$$
For general $k$, the first $\lceil (k+1)/2 \rceil$ rows of the matrix
are linearly independent and the remaining rows repeat earlier rows,
so the rank is $\lceil (k+1)/2 \rceil$, 
so $\dim V_1 = \lceil (k+1)/2 \rceil$ as claimed.
To find $\dim V_1^\perp = \dim V_{-1}$,
we subtract the two half-matrices instead of adding, obtaining 
$$ \left(
\begin{array}{ccc}
\,0\,&\,0\,&\,0 \\
0&0&1 \\
0&1&1 \\
1&1&1 
\end{array}
\right)
-
\left(
\begin{array}{ccc}
\,1\,&\,1\,&\,1 \\
0&1&1 \\
0&0&1 \\
0&0&0 
\end{array}
\right)
=
\left(
\begin{array}{rrr}
-1&-1&-1 \\
 0&-1& 0 \\
 0& 1& 0 \\
 1& 1& 1
\end{array}
\right)
$$
and
$$
\left(
\begin{array}{cccc}
\,0\,&\,0\,&\,0\,&\,0 \\
0&0&0&1 \\
0&0&1&1 \\
0&1&1&1 \\
1&1&1&1 
\end{array}
\right)
-
\left(
\begin{array}{cccc}
\,1\,&\,1\,&\,1\,&\,1 \\
0&1&1&1 \\
0&0&1&1 \\
0&0&0&1 \\
0&0&0&0 
\end{array}
\right)
=
\left(
\begin{array}{rrrr}
-1&-1&-1&-1 \\
 0&-1&-1& 0 \\
 0& 0& 0& 0 \\
 0& 1& 1& 0 \\
 1& 1& 1& 1 
\end{array}
\right)
$$
in the cases $k=3$ and $k=4$.
The first $\lfloor (k+1)/2 \rfloor$ rows of the matrix
are linearly independent and each of the remaining rows
is either the zero vector or the negative of an earlier row,
so the rank is $\lfloor (k+1)/2 \rfloor$, 
so $\dim V_{-1} = \lfloor (k+1)/2 \rfloor$ as claimed.
\end{proof}

Following up on my earlier remark about invariant theory
at the end of section~\ref{sec:framework}, let us pause to consider 
the quadratic invariant $(g_1 + ... + g_k)(Ug_1 + ... + Ug_k)$, 
equal to the number of 1's in $x$ times the number of 1's in $Tx$
(or, equivalently,
the number of 0's in $x$ times the number of 1's in $x$).
If for all $i$ we put $s_i = (g_i + Ug_i)/2$ 
(an eigenfunction for the eigenvalue 1)
and $a_i = (g_i - Ug_i)/2$ (an eigenfunction for the eigenvalue $-1$)
so that $g_i = s_i + a_i$ and $Ug_i = s_i - a_i$, then putting
$S = s_1 + ... + s_k$ and $A = a_1 + ... + a_k$ we see that
the quadratic invariant equals $(S+A)(S-A) = S^2 - A^2$.
This representation shows us explicitly how the quadratic invariant arises
from all the possible pairs of eigenfunction $s_i,s_j$ with the eigenvalue $1$
and all the possible pairs of eigenfunction $a_i,a_j$ with the eigenvalue $-1$.

\medskip

We now consider $n>2$.

\begin{prop}
For $n>2$, the multiplicity of the eigenvalue $\zeta$
in the spectrum of $U_{n,k}$
is $\lfloor k/2 \rfloor + 1$ when $\zeta = 1$
and $k$ when $\zeta \neq 1$.
\end{prop}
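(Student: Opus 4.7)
The plan is to compute each eigenspace dimension directly by using the spanning $\zeta$-eigenfunctions $E_i^\zeta := g_i + \zb\,Ug_i + \cdots + \zb^{n-1}U^{n-1}g_i$ furnished by Proposition~\ref{eigen}, and testing their linear independence by evaluating on the one-parameter family of multisets $x^{(\ell)} := 0^{k-\ell}(n-1)^\ell$ for $\ell = 0, 1, \ldots, k$. The key simplification is that each $T^j x^{(\ell)}$ has at most two distinct values; a short analysis of cyclic shift on the multiplicity vector shows that, for $1 \le j \le n-1$, $g_i(T^j x^{(\ell)}) = j-1$ when $i \le \ell$ and $j$ when $i > \ell$, while $g_i(x^{(\ell)}) = 0$ when $i \le k-\ell$ and $n-1$ otherwise.

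For the invariant subspace ($\zeta = 1$), the classical homomesy noted at the start of this section gives $f_i + f_{k+1-i} = n(n-1)$ as a constant function, which both places the constant $1$ in $V_1$ and reduces the spanning set to $\{1, f_1, \ldots, f_{\lfloor k/2 \rfloor}\}$; hence $\dim V_1 \le \lfloor k/2 \rfloor + 1$. For the reverse inequality I would evaluate the $f_i$'s on $x^{(\ell)}$ for $0 \le \ell \le \lfloor k/2 \rfloor$: summing the two-value formula over $j$ places every entry in $\{(n-1)(n-2)/2,\, n(n-1)/2\}$ in a staircase pattern, and the resulting $(\lfloor k/2 \rfloor + 1) \times (\lfloor k/2 \rfloor + 1)$ matrix (with a leading column of $1$'s) has determinant equal up to sign to $(n-1)^{\lfloor k/2 \rfloor} \ne 0$.

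For a non-unital eigenvalue $\zeta \ne 1$, I would set $A := \sum_{j=1}^{n-1}(j-1)\zb^j$ and $B := \sum_{j=1}^{n-1} j\,\zb^j$. Using $1 + \zb + \cdots + \zb^{n-1} = 0$ one finds $A - B = 1$, and differentiating the geometric series gives $B = n/(\zb - 1) \ne 0$. A case analysis parallel to the one above shows that $E_i^\zeta(x^{(\ell)})$ takes only the four values $A$, $B$, $(n-1)+A$, and $(n-1)+B$. Assuming $\sum_i a_i E_i^\zeta = 0$ and setting $S_\ell := a_1 + \cdots + a_\ell$, evaluating at $x^{(0)}$ yields $B\,S_k = 0$ and hence $S_k = 0$; evaluating at $x^{(\ell)}$ for $0 < \ell < k$ then collapses, using $A - B = 1$, to the single recursion
$$S_\ell = (n-1)\, S_{k-\ell}.$$
Applying this at both $\ell$ and $k - \ell$ gives $S_\ell = (n-1)^2 S_\ell$; for $n > 2$ we have $(n-1)^2 \ne 1$, so every $S_\ell$ and hence every $a_i$ vanishes, proving linear independence. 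Therefore $\dim V_\zeta = k$.

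The main obstacle is organizing the case analysis for $E_i^\zeta(x^{(\ell)})$: the two regimes $\ell \le k/2$ and $\ell > k/2$, together with the boundary rows $\ell = 0$ and (when $k$ is even) $\ell = k/2$, must each be handled before the equations collapse to the clean recursion above. Once that is in place the argument is purely algebraic, and the hypothesis $n > 2$ enters in exactly one place, through $(n-1)^2 - 1 \ne 0$, which explains why the case $n = 2$ (treated in Proposition~\ref{prop:two}) behaves differently.
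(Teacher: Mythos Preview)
Your argument is correct and genuinely different from the paper's.  Both proofs work through the spanning $\zeta$-eigenfunctions of Proposition~\ref{eigen} and test them on a one-parameter family of two-valued multisets, but the paper uses the family $0^{k-j+1}1^{j-1}$ while you use $0^{k-\ell}(n-1)^{\ell}$.  For $\zeta\neq 1$ the paper assembles the $k\times k$ evaluation matrix, identifies a four-value ``compass'' pattern $N,E,S,W$, and evaluates the determinant via a column-reduction recurrence (Buhler's trick), obtaining an explicit closed form $\pm N(N-S)^{\lfloor (k-1)/2\rfloor}(W-E)^{\lfloor k/2\rfloor}$ that is nonzero precisely when $n>2$.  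You instead bypass the determinant entirely: introducing the partial sums $S_\ell=a_1+\cdots+a_\ell$ and using $A-B=1$ and $S_k=0$ collapses every evaluation equation to $S_\ell=(n-1)S_{k-\ell}$, whence $((n-1)^2-1)S_\ell=0$.  For $\zeta=1$ the paper reads off the rank of the same compass matrix after the degeneration $N=S$, while you use the stated homomesy $g_i+g_{k+1-i}\equiv n-1$ to get the upper bound and a small triangular determinant for the lower bound.  Your route is more elementary and isolates the hypothesis $n>2$ in a single transparent step; the paper's route yields an explicit determinant formula and treats both eigenvalue cases with one matrix.  One small caveat: your upper bound for $\dim V_1$ invokes the homomesy as ``known lore'' rather than proving it, whereas the paper's rank argument is self-contained; if you want the proof to stand alone you should supply the short verification of that homomesy.
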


\begin{proof}
I prove the second claim first.  
Given $\zeta \neq 1$ with $\zeta^n = 1$, define 
$h_i = g_i + \zb U g_i + \zb^{2} U^2 g_i + \cdots + \zb^{n-1} U^{n-1} g_i$
for $1 \leq i \leq k$ 
(the spanning $\zeta$-eigenfunctions; see Proposition~\ref{eigen}).
Since these functions span $V_\zeta$, $\dim V_\zeta \leq k$.
To prove the reverse inequality and thereby prove equality
it suffices to find a $k$-element subset of $X$, 
call it $Y = \{y^{(1)},\dots,y^{(k)}\}$, so that the vectors 
$(g_i(y^{(1)}),\dots,g_i(y^{(k)}))^t$ (with $1 \leq i \leq k$)
are linearly independent; that is, 
we need the $k$-by-$k$ matrix $M$ 
whose $i,j$th entry is $g_i(y^{(j)})$ to be nonsingular.
Let $y^{(j)} = 0^{k-j+1} 1^{j-1}$.
Then $M$ has only four distinct entries:
\begin{eqnarray*}
W & = & 0 + 1\z + 2\z^2 + \cdots + (n-2)\z^{n-2} + 0\z^{n-1} \\
N & = & 0 + 1\z + 2\z^2 + \cdots + (n-2)\z^{n-2} + (n-1)\z^{n-1} \\
S & = & 1 + 2\z + 3\z^2 + \cdots + (n-1)\z^{n-2} + 0\z^{n-1} \\
E & = & 1 + 2\z + 3\z^2 + \cdots + (n-1)\z^{n-2} + (n-1)\z^{n-1} 
\end{eqnarray*}
(For later use, note that
\begin{eqnarray*}
(1-\z)(S-N) 
& = & (1 - \z) (1 + \z + \z^2 + \cdots + \z^{n-2} - (n-1)\z^{n-1}) \\
& = & 1 - \z^{n-1} - (n-1)\z^{n-1} + (n-1) \z^n \\
& = & n(1 - \z^{n-1})
\end{eqnarray*}
which is nonvanishing for all $n > 1$ and that
\begin{eqnarray*}
(1-\z)(E-W) 
& = & (1 - \z) (1 + \z + \z^2 + \cdots + \z^{n-2} + (n-1)\z^{n-1}) \\
& = & 1 - \z^{n-1} + (n-1)\z^{n-1} - (n-1) \z^n \\
& = & (n-2)(\z^{n-1} - 1) 
\end{eqnarray*}
which vanishes for $n=2$ and is nonvanishing for $n > 2$.)

When $k$ is odd, 
$M$ has the form illustrated below for $k=3$ and $k=5$.
$$
\left(
\begin{array}{ccc}
N&N&N \\
W&N&E \\
W&S&E
\end{array} \right)
\ \ \ \ 
\left(
\begin{array}{ccccc}
N&N&N&N&N \\
W&N&N&N&E \\
W&W&N&E&E \\
W&W&S&E&E \\
W&S&S&S&E
\end{array} \right)
$$
Note that each matrix is split by diagonal and antidiagonal lines
into four zones, each of which has all entries equal. 
Similarly, when $k$ is even, 
$M$ has the form illustrated below for $k=4$ and $k=6$.
$$
\left(
\begin{array}{cccc}
N&N&N&N \\
W&N&N&E \\
W&W&E&E \\
W&S&S&E
\end{array}
\right)
\ \ \ \ 
\left(
\begin{array}{cccccc}
N&N&N&N&N&N \\
W&N&N&N&N&E \\
W&W&N&N&E&E \\
W&W&W&E&E&E \\
W&W&S&S&E&E \\
W&S&S&S&S&E
\end{array} \right)
$$
Let $D_k(N,E,S,W)$ denote the determinant 
of the $k$-by-$k$ matrix of the above form,
where for now $N,E,S,W$ are to be treated as formal indeterminates.
There are many ways to evaluate these determinants,
but my favorite is the following argument, communicated to me by Joe Buhler.
If we subtract $W/E$ times the last column from the first column,
so that the only nonvanishing entry in the first column
is $N - (E/W)N = N(E-W)/E$, we obtain the recurrence
$$D_k(N,E,S,W) = (N(E-W)/E) \ D_{k-1}(E,S,W,N)$$
(where the cyclic rotation of the arguments
corresponds to 90 degree rotation of the submatrix).
With this recurrence and the initial condition $D_2(N,E,S,W) = N(E-W)$
it is easy to prove the general formula
$$
D_{2i+r}(N,E,S,W) = 
\left\{
\begin{array}{ll}
(-1)^i N (N-S)^{i-1} (W-E)^{i} & \mbox{if $r=0$,} \\
(-1)^i N (N-S)^{i} (W-E)^{i} & \mbox{if $r=1$} 
\end{array}
\right.
$$
by induction.
When $\zeta \neq 1$ we have $N - S \neq 0$ and $W - E \neq 0$
as noted earlier so $D_k(N,E,S,W) \neq 0$,
proving that the matrix is nonsingular as claimed.
(This is the part of the proof that assumes $n>2$; we have $W=E$ when $n=2$.)

It remains to consider $\zeta=1$.
In this case we have $N=S$ so the matrix is singular.
Indeed, the last $\lfloor (k-1)/2 \rfloor$ rows of $M$
coincide with earlier rows,
showing that the corank is at least $\lfloor (k-1)/2 \rfloor$,
or equivalently that the rank is at most 
$\lceil (k+1)/2 \rceil = \lfloor k/2 \rfloor + 1$.
But it is easy to see that the first $\lfloor k/2 \rfloor + 1$ rows of $M$ 
are linearly independent, so equality holds, as claimed.
\end{proof}

This proof was greatly facilitated
by the fact that we did not need to work in a basis,
but were able to find a manageable spanning set of vectors.

It is worth noting that all the non-unital eigenvalues 
in our example have equal multiplicity.
Of course, the rationality of the entries of the matrix
implies that two Galois-conjugate roots of 1
(i.e., roots that are primitive $m$th roots for the same $m$)
must have the same multiplicity,
but a priori roots that are not Galois-conjugate
could have different multiplicities.
Indeed, consider the rotation action $T_{4,2}$
restricted to multisets in which all elements are distinct;
this set consists of the six elements 01, 02, 03, 12, 13, and 23,
and rotation sends sends 01 to 12 to 23 to 03 to 01
and sends 02 to 13 to 02.
The interested reader can verify that
1 has multiplicity 2, $-1$ has multiplicity 2,
and $i$ and $-i$ each have multiplicity 1,
so in this case the non-unital eigenvalues do not all have the same multiplicity.

\end{section}

\begin{section}{Example: Rowmotion on a product of two chains}
\label{sec:rowmotion}

Recall from [S1] the definitions of
a partially ordered set $P$, order ideals (or downsets),
filters (or upsets), and antichains,
and from [S2] and [EP1] the definitions of
the order polytope $\cO(P)$, reverse order polytope $\tilde{\cO}(P)$,
and chain polytope $\cA(P)$ of a poset.
For notation, I remind the reader that
the order polytope of a finite poset $P$
is the set of order-preserving maps $\lambda : P \rightarrow [0,1]$,
viewed as a subset of $\R^{|P|}$ in the natural way.
Put $X = \cO(P)$, $Y = \cA(P)$, $Z = \tilde{\cO}(P)$, and $T = \rowP$.
Piecewise-linear rowmotion $\rowP$ (defined in [EP2])
is a volume-preserving invertible map
from the order polytope to itself
related to Striker and Williams' original definition
of rowmotion for order ideals [SW].
When $P$ is $[a] \times [b]$, $\rowP$ is of order $n := a+b$.
For each $p \in P$, define the evaluation statistic
$1_p(\lambda) = \lambda(p) \in [0,1]$,
so that we have statistics of the form $1_p \circ T^j$
($0 \leq j \leq n-1$, $p \in P$)
spanning a vector space $V$ of dimension at most $nab$.
Let $U$ be the time-evolution operator 
on the space $V$ sending $f \in V$ to $f \circ T \in V$.

Einstein and Propp, in unpublished work 
that was later generalized by Joseph and Roby 
(see sections 4 and 6 of the September 1, 2018 version of [EP1]
and Theorem 5.12 in [JR]), showed that piecewise-linear rowmotion,
although originally defined as a composition 
of piecewise-linear involutions (toggle operators),
can be obtained as a composition of three {\em transfer operators}
$\downt: X \rightarrow Y$, $\upti: Y \rightarrow Z$, 
and $\Theta: Z \rightarrow X$,
mirroring the original definition of rowmotion
in the work of Brouwer and Schrijver [BS].
My goal in this section is to assert 
that linearization is ``functorial''
in the sense that the linearization of piecewise-linear rowmotion
can be expressed as a composition of three linearized transfer operators.
It is not my goal here to prove this assertion 
nor even to make the assertion plausible;
I merely aim to make the {\em content} of the assertion clear
by demonstrating it in the case $a=b=2$
(the simplest case in which the piecewise-linear map $T$ 
is not merely linear or affine).

In the transfer operator formulation, $\rowP$ for an arbitrary finite poset $P$
is expressed as the composition $\Theta \circ \upti \circ \downt$
where $\downt$ ({\em down-transfer}) is a piecewise-linear map from $X$ to $Y$,
$\upti$ ({\em inverse up-transfer}) is a piecewise-linear map from $Y$ to $Z$,
and $\Theta$ ({\em complementation}) is an affine map from $Z$ back to $X$.
In the case $a=b=2$, for $x \in X$ put 
$y = \downt(x)$, $z = \upti(y)$, and $x' = \Theta(z)$ so that $Tx = x'$.
Explicitly,
\begin{eqnarray*}
\downt(x_1,x_2,x_3,x_4) & = &
(x_1 - 0,\:x_2 - x_1,\:x_3 - x_1,\:x_4 - \max(x_2,x_3)), \\
\upti(y_1,y_2,y_3,y_4) & = &
(y_1+\max(y_2,y_3)+y_4,\:y_2 + y_4,\:y_3 + y_4,\:y_4), \ \mbox{and} \\
\Theta(z_1,z_2,z_3,z_4) & = & (1 - z_1,\:1 - z_2,\:1 - z_3,\:1 - z_4).
\end{eqnarray*}

To reduce the complexity of the notation
let us use $x_1,x_2,x_3,x_4$ to denote the coordinate functions
mapping $X$ to $\R$, and likewise for the $y$ and $z$ variables.
The dynamical closure of $x_1,x_2,x_3,x_4$ in $X$
under the action of $\Theta \circ \upti \circ \downt$ 
is 6-dimensional, with additional basis functions
$x_5 = \max(x_2,x_3)$ and the constant function $x_6 = 1$.
The dynamical closure of $y_1,y_2,y_3,y_4$ in $Y$
under the action of $\downt \circ \Theta \circ \upti$ 
is 6-dimensional, with additional basis functions
$y_5 = \max(y_2,y_3)$ and $y_6 = 1$.
The dynamical closure of $z_1,z_2,z_3,z_4$ in $Z$
under the action of $\upti \circ \downt \circ \Theta$ 
is 6-dimensional, with additional basis functions
$z_5 = \max(z_2,z_3)$ and $z_6 = 1$.
With these extra variables we have
\begin{eqnarray*}
U_{\downt}(x_1,x_2,x_3,x_4,x_5,x_6) & = &
(x_1,\,x_2\!-\!x_1,\,x_3\!-\!x_1,\,x_4\!-\!x_5,\,x_5\!-\!x_1,\,x_6), \\
U_{\upti}(y_1,y_2,y_3,y_4,y_5,y_6) & = &
(y_1\!+\!y_4\!+\!y_5,\,y_2\!+\!y_4,\,y_3\!+\!y_4,\,y_4,\,y_4\!+\!y_5,\,y_6), \ \mbox{and} \\
U_{\Theta}(z_1,z_2,z_3,z_4,z_5,z_6) & = & 
(z_6\!-\!z_1,\,z_6\!-\!z_2,\,z_6\!-\!z_3,\,z_6\!-\!z_4,\,z_5\!+\!z_6\!-\!z_2\!-\!z_3,\,z_6)
\end{eqnarray*}
which demonstrates linear relations among all the quantities.
(Those who wish to check the formulas should be mindful that
at one point one needs to use the identity $\max(s,t)+\min(s,t) = s+t$.)
Note that $U_{\Theta \circ \upti \circ \downt}
= U_{\downt} \, U_{\upti} \, U_{\Theta}$.

A similar picture prevails for birational rowmotion (see [EP2]),
where now the monoid is multiplicative rather than additive,
along the lines of the example presented in the next section.


\end{section}

\begin{section}{Example: The Lyness 5-cycle}
\label{sec:lyness}
Here I revisit an example from section 2.6 of [PR].
The Lyness 5-cycle (the smallest nontrivial cluster algebra)
exhibits a non-obvious homomesy:
if $X$ is the set of all $(x,y)$ in $\R^2$ with
$x$, $y$, $x+1$, $y+1$, and $x+y+1$ all nonzero,
and $T:X \rightarrow X$ is the period-5 map
sending $(x,y)$ to $(y,(y+1)/x)$,
and $g: X \rightarrow \R$ is the map
sending $(x,y)$ to $\log |x^{-1} + x^{-2}|$,
then $g$ is 0-mesic.

To fit this into the proposed framework, let us use the multiplicative monoid
generated by the functions $x$, $y$, $x+1$, $y+1$, and $x+y+1$;
technically this is not a vector space
unless we allow fractional exponents,
but for present purposes this turns out not to matter.
The map $T$ sends $x^a y^b (x+1)^c (y+1)^d (x+y+1)^e$
to $x^{a'} y^{b'} (x+1)^{c'} (y+1)^{d'} (x+y+1)^{e'}$
where
$$\left( a' \ b' \ c' \ d' \ e' \right) =
\left(a \ b \ c \ d \ e \right)
\left( \begin{array}{rrrrr}
0 & \ -1 & \ \ \ 0 & \ -1 & \ -1 \\
1 &  0 & 0 &  0 &  0 \\
0 &  0 & 0 &  0 &  1 \\
0 &  1 & 1 &  0 &  1 \\
0 &  0 & 0 &  1 &  0
\end{array} \right) .$$
This 5-by-5 matrix $M$ satisfies $M^5 = I$,
and each of the 5th roots of unity
occurs in the spectrum with multiplicity 1.

The Lyness 5-cycle is associated with four-row frieze patterns.
The above analysis can also be applied to frieze patterns
with more than four rows; the details differ slightly
according to whether the number of rows is odd or even.
The eigenvalues still have dynamical meaning,
inasmuch as they determine the dimensionalities
of spaces of invariants for powers of the shift-map;
however, the eigenvalues do not appear to be associated
with anything like eigenvectors,
since it is unclear how to to make sense of
expressions like $x^\alpha y^\beta (x+1)^\gamma (y+1)^\delta (x+y+1)^\epsilon$
when $\alpha,\beta,\gamma,\delta,\epsilon$ are algebraic numbers.

\begin{section}{Flatness}

I have already remarked that, for a map $T$ of period $n$,
the rationality of the entries of the presenting matrix
implies that two Galois-conjugate $n$th roots of 1
must have the same multiplicity.  When $n$ is prime,
this implies that all non-unital eigenvalues have the same multiplicity,
but when $n$ is composite,
the function that maps eigenvalues to multiplicities
(hereafter the {\em spectral multiplicity function})
could a priori be far from constant.
It is therefore somewhat surprising that for many examples,
the spectral multiplicity function is ``flat'' in a logarithmic sense,
meaning that its values are bounded by constant multiples of one another.
For instance, in Proposition 4.1
the multiplicity takes on two values whose ratio approaches 1 as $k$ gets large,
while in Proposition 4.2
the multiplicity takes on two values whose ratio approaches 2 as $k$ gets large.

I do not have a quantitative conjecture, but I suggest that this phenomenon --
the flatness of the spectral multiplicity function --
might apply in many situations.
To the extent that this flatness is prevalent, 
it would provide a loose explanation
for the relative paucity of invariants in comparison with homomesies.
For, if the multiplicities of distinct eigenvalues
always have ratio between $1/c$ and $c$ (with $c>1$),
then the multiplicity of 1 divided by the sum of all the multiplicities
must also be between $1/c$ and $c$,
implying that $\dim V / \dim V_1^\perp$ is between $n/c$ and $nc$.
That is, homomesies outnumber invariants roughly by a factor of $n-1$.

\end{section}

\bigskip

\noindent
{\sc Acknowledgments:} The author thanks
Joe Buhler, David Einstein, Darij Grinberg, Michael Joseph, and Tom Roby
for helpful suggestions.

\end{section}

\bigskip

\noindent
{\LARGE References}

\medskip

\noindent
[BS] Andries Brouwer and Alexander Schrijver, 
On the period of an operator, defined on antichains. 
Math. Centrum report ZW 24/74 (1974).

\medskip

\noindent
[EP1] David Einstein and James Propp,
Combinatorial, piecewise-linear, and birational homomesy
for products of two chains.
Preprint;
\href{https://arxiv.org/abs/1310.5294}{https://arxiv.org/abs/1310.5294}.

\medskip

\noindent
[EP2] David Einstein and James Propp,
Combinatorial, piecewise-linear, and birational homomesy 
for products of two chains.
To appear in {\it Algebraic Combinatorics}.

\medskip

\noindent
[JR] Michael Joseph and Tom Roby,
Birational and noncommutative lifts of antichain toggling and rowmotion.
To appear in {\it Algebraic Combinatorics}.

\medskip

\noindent
[PR] James Propp and Tom Roby,
Homomesy in Products of Two Chains.
{\it Electronic Journal of Combinatorics},
Volume 22, Issue 3 (2015), article P3.4.

\medskip

\noindent
[S1] Richard Stanley, Enumerative Combinatorics, vol 1.

\medskip

\noindent
[S2] Richard Stanley, Two Poset Polytopes.
{\em Discrete Comput.\ Geom.\/} {\bf 1} (1986), no.\ 1, 9--23.

\medskip

\noindent
[SW] Jessica Striker and Nathan Williams,
Promotion and rowmotion. 
{\it European Journal of Combinatorics} {\bf 33} (2012), 1919--1942.

\end{document}